\newcommand{\R}{\mathbb{R}}
\newcommand{\C}{\mathbb{C}}
\newtheorem{theorem}{Theorem}[section]
\newtheorem{lemma}[theorem]{Lemma}
\newtheorem{proposition}[theorem]{Proposition}
\theoremstyle{remark}
\theoremstyle{definition}
\newtheorem{example}{Example}
\numberwithin{equation}{section}
\def\@cite#1#2{[{{\bfseries #1}\if@tempswa , #2\fi}]}
\begin{document}

~\vspace{0pt}

\begin{center}
\Large{{\bf
An elementary proof of asymptotic behavior \\[5pt]of solutions of $u''=Vu$
}}
\end{center}

\vspace{5pt}

\begin{center}
Giorgio Metafune%
\footnote{Dipartimento di Matematica ``Ennio De Giorgi'', 
Universit\`a del Salento, Via Per Arnesano, 73100, Lecce, Italy, 
E-mail:\ {\tt giorgio.metafune@unisalento.it}}
and Motohiro Sobajima%
\footnote{
Dipartimento di Matematica ``Ennio De Giorgi'', 
Universit\`a del Salento, Via Per Arnesano, 73100, Lecce, Italy, 
E-mail:\ {\tt msobajima1984@gmail.com}}
\end{center}

\newenvironment{summary}{\vspace{.5\baselineskip}\begin{list}{}{%
     \setlength{\baselineskip}{0.85\baselineskip}
     \setlength{\topsep}{0pt}
     \setlength{\leftmargin}{12mm}
     \setlength{\rightmargin}{12mm}
     \setlength{\listparindent}{0mm}
     \setlength{\itemindent}{\listparindent}
     \setlength{\parsep}{0pt}
     \item\relax}}{\end{list}\vspace{.5\baselineskip}}
\begin{summary}
{\footnotesize {\bf Abstract.}
We provide an elementary proof of the asymptotic behavior of solutions of second order differential equations.}
\end{summary}

{\footnotesize{\it Mathematics Subject Classification}\/ (2010): %
34E10.}

{\footnotesize{\it Key words and phrases}\/: %
Elementary proof, ordinary differential equations, asymptotic behavior.
}


\section{Introduction}
The asymptotic behaviour of the solutions of the ordinary differential equation
\begin{equation}\label{ode.V}
u''(x)=V(x)u(x), \qquad x\in (0,\infty)
\end{equation}
is an important tool in various fields of mathematics and mathematical physics, in particular when special functions are involved. 
It can be found in \cite[Section 6.2]{Olver} and partially in \cite[Chapter 10]{BW} and in \cite[Chapter IV]{Erdely}
when $V(x)=f(x)+g(x)$
\begin{equation}\label{ode.f.g}
u''(x)=\bigl(f(x)+g(x)\bigr)u(x), \qquad x\in (0,\infty)
\end{equation}
assuming
\begin{equation}\label{psi}
\psi_{f,g}:=|f|^{-\frac{1}{4}}\left(-\frac{d^2}{dx^2}+g\right)|f|^{-\frac{1}{4}}\in L^1(0,\infty).
\end{equation}
The proof is usually done treating first the cases $f=\pm 1$ and then reducing to them  the general case, 
by the Liouville transformation. We follow the same approach but simplify the cases $f=\pm 1$ 
by using Gronwall's Lemma, instead of successive approximations. 
In order to keep the exposition at an elementary level, 
we avoid also Lebesgue integration and dominated convergence (which could shorten some proofs). 
We consider both the  behavior at infinity and near isolated singularities 
and apply the results to Bessel functions. We also recall that the general case
$$
u''(x)+g(x)u'(x)=V(x)u(x)
$$ 
can be reduced to the form (\ref{ode.V}) (with another $V$)  by writing $u=\frac 12(\exp{\int g})v$.

\section{Behavior near infinity in the simplest cases} 
\; First we consider the cases $f\equiv 1$ and $f\equiv -1$ and we prove the following results to which the general case reduces. 

\begin{proposition}\label{lem2.1}
If $f=1$, $g\in L^1(0,\infty)$, then there exist 
two solutions $u_1$ and $u_2$ of 
\eqref{ode.f.g} such that, as $x\to \infty$, 
\begin{gather}
\label{bhv.ex}
e^{-x}u_1(x)\to 1, \qquad e^{-x}u_1'(x)\to 1, 
\\
\label{bhv.e-x}
e^{x}u_2(x)\to 1, \qquad e^{x}u_2'(x)\to -1.
\end{gather}

\end{proposition}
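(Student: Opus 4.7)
The plan is to handle $u_2$ first via an integral-equation reformulation and then obtain $u_1$ by reduction of order. For $u_2$, I would seek it in the form $u_2(x)=e^{-x}(1+\phi(x))$ with $\phi\to 0$; substituting into \eqref{ode.f.g} gives $\phi''-2\phi'=g(1+\phi)$, equivalently $(e^{-2x}\phi')'=e^{-2x}g(1+\phi)$. Imposing the natural endpoint conditions $\phi(\infty)=\phi'(\infty)=0$ and integrating twice from $x$ to $\infty$ (swapping the order of integration) produces the Volterra equation
\begin{equation*}
\phi(x)=\tfrac{1}{2}\int_x^\infty\bigl(1-e^{2(x-t)}\bigr)g(t)\bigl(1+\phi(t)\bigr)\,dt,
\end{equation*}
whose kernel lies in $[0,\tfrac{1}{2}]$ on $\{t\geq x\}$—a key structural fact that will make Gronwall applicable.

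The next step is to establish existence of such a $\phi$ on $[x_0,\infty)$ for $x_0$ large enough that $\tfrac12\|g\|_{L^1(x_0,\infty)}$ is small, and then to extend $u_2=e^{-x}(1+\phi)$ to all of $(0,\infty)$ by backwards ODE integration. From the integral equation one has the a priori estimate
\begin{equation*}
|\phi(x)|\leq\tfrac{1}{2}\int_x^\infty|g|\,dt+\tfrac{1}{2}\int_x^\infty|g(t)|\,|\phi(t)|\,dt,
\end{equation*}
and the paper's Gronwall step (applied on $[x,\infty)$) will deliver $|\phi(x)|\leq\tfrac{1}{2}e^{\|g\|_{L^1}/2}\int_x^\infty|g|\to 0$, giving $e^x u_2\to 1$. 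The companion limit $e^x u_2'\to -1$ then follows from $e^x u_2'=\phi'-1-\phi$ together with the first-integration identity $\phi'(x)=-e^{2x}\int_x^\infty e^{-2t}g(1+\phi)\,dt$, whose modulus is bounded by $\int_x^\infty|g|(1+|\phi|)\to 0$.

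For $u_1$ I would apply reduction of order to the just-constructed $u_2$: after choosing $x_0$ so that $u_2>0$ on $[x_0,\infty)$, set $u_1(x)=2u_2(x)\int_{x_0}^x u_2(t)^{-2}\,dt$, which is automatically a solution of \eqref{ode.f.g}. Writing $u_2(t)^{-2}=e^{2t}(1+\phi(t))^{-2}$, both of the required limits reduce to the elementary lemma that $e^{-2x}\int_{x_0}^x e^{2t}f(t)\,dt\to L/2$ whenever $f$ is bounded and $f(t)\to L$ (proved by splitting the range at $x-A$ with $A$ large). Applied with $f=(1+\phi)^{-2}\to 1$ this gives $e^{-x}u_1\to 1$, and the analogous calculation with $u_1'=2u_2'\int_{x_0}^x u_2^{-2}\,dt+2/u_2$ gives $e^{-x}u_1'\to-1+2=1$.

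The step I expect to be hardest is producing $\phi$ from the integral equation in a manner faithful to the paper's stated avoidance of successive approximations. The natural workaround is to separate existence from asymptotics: invoke standard ODE existence on $[x_0,\infty)$ with appropriate initial data, and then use the Gronwall bound above both to show boundedness of $\phi$ and to pin down the one-parameter family of initial data producing the decay $\phi\to 0$ at infinity.
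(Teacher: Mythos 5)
Your asymptotic analysis is sound: granting a bounded solution $\phi$ of the backward Volterra equation, the reverse Gronwall bound, the limits $e^{x}u_2\to 1$, $e^{x}u_2'\to -1$, and the subsequent reduction-of-order construction of $u_1$ (including the elementary lemma $e^{-2x}\int_{x_0}^x e^{2t}f(t)\,dt\to L/2$) are all correct, and the computation $e^{-x}u_1'\to -1+2=1$ checks out. The genuine gap is exactly the step you flag as hardest, and your proposed workaround does not close it. The equation
\[
\phi(x)=\tfrac{1}{2}\int_x^\infty\bigl(1-e^{2(x-t)}\bigr)g(t)\bigl(1+\phi(t)\bigr)\,dt
\]
is a boundary-value problem with data prescribed at $x=+\infty$; it is \emph{not} equivalent to any initial value problem at $x_0$ that you can write down in advance. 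Indeed, the admissible derivative is $\phi'(x_0)=-e^{2x_0}\int_{x_0}^\infty e^{-2t}g(t)(1+\phi(t))\,dt$, which depends on the unknown $\phi$ on the whole half-line; a generic choice of data at $x_0$ produces $\phi\sim \tfrac{c}{2}e^{2x}$, for which neither the integral equation nor the Gronwall estimate (whose right-hand side $\int_x^\infty|g|\,|\phi|$ must first be known finite) is available. So ``invoke standard ODE existence and then use the Gronwall bound to pin down the right initial data'' is circular: the bound presupposes the object whose existence is in question. The only elementary exits are (i) successive approximations on the backward Volterra equation (a contraction since $\tfrac12\|g\|_{L^1(x_0,\infty)}<1$), which is precisely what the paper set out to avoid, or (ii) constructing the dominant solution first.

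This is where your route inverts the paper's, and why the paper's order is not an arbitrary choice. The paper builds the \emph{growing} solution first from the forward Volterra equation \eqref{variation} with $c_1=1$, $c_2=0$; that equation is equivalent to the initial value problem $u(a)=e^{a}$, $u'(a)=e^{a}$, so existence is free, and Gronwall on the compact interval $[a,x]$ needs no a priori global information. The decaying solution is then obtained by reduction of order, $u_2=2u_1\int_x^\infty u_1(s)^{-2}\,ds$, whose convergence is guaranteed because $u_1^{-2}$ is already known to decay like $e^{-2x}$. Your proposal runs reduction of order in the unproblematic direction (recessive $\to$ dominant, where $\int_{x_0}^x u_2^{-2}$ converges trivially) but pays for it by having to solve the genuinely hard existence problem first. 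To repair the proof while keeping your structure you would have to either accept a fixed-point/successive-approximation argument for $\phi$, or prepend the paper's construction of $u_1$ — at which point you have reproduced the paper's proof with an extra detour.
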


\begin{proposition}\label{lem2.2}
If $f=-1$, $g\in L^1(0,\infty)$, then there exist two solutions $v_1$ and $v_2$ of 
\eqref{ode.f.g}  such that, as $x\to \infty$, 
\begin{gather}
\label{bhv.eix}
e^{-ix}u_1(x)\to 1, \qquad e^{-ix}u_1'(x)\to i, 
\\
\label{bhv.e-ix}
e^{ix}u_2(x)\to 1, \qquad e^{ix}u_2'(x)\to -i.
\end{gather}
\end{proposition}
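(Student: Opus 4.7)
The plan is to mimic the proof of Proposition~\ref{lem2.1}, with the oscillatory free solutions $e^{\pm ix}$ now playing the role of the exponentials $e^{\pm x}$. Because $|e^{\pm ix}|=1$, the resulting estimates are in fact cleaner than in the $f=1$ case, since there are no growing exponentials to compensate. The Wronskian of $e^{ix}$ and $e^{-ix}$ equals $-2i$, so variation of parameters applied to the free equation $u''+u=0$ produces the kernel $\sin(t-x)$, and one is led to seek $u_1$ as a solution of
\begin{equation*}
u_1(x)=e^{ix}+\int_x^{\infty}\sin(t-x)\,g(t)\,u_1(t)\,dt.
\end{equation*}

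To build such a solution without Picard iteration, for every large $R$ I would solve the Cauchy problem for~\eqref{ode.f.g} with data $u(R)=e^{iR}$, $u'(R)=ie^{iR}$; standard theory gives a solution $u_1^{(R)}$ on $(0,\infty)$, which by the same variation-of-parameters computation satisfies $u_1^{(R)}(x)=e^{ix}+\int_x^R\sin(t-x)\,g(t)\,u_1^{(R)}(t)\,dt$. Setting $w^{(R)}(x)=e^{-ix}u_1^{(R)}(x)$ and using $|\sin|\le 1$, one obtains $|w^{(R)}(x)|\le 1+\int_x^R|g(t)||w^{(R)}(t)|\,dt$, whence Gronwall's lemma yields the $R$-uniform bound $|w^{(R)}(x)|\le\exp\!\left(\int_x^R|g|\right)\le e^{\|g\|_{L^1}}$. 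A second application of Gronwall to the difference $u_1^{(R')}-u_1^{(R)}$ shows that $\{u_1^{(R)}\}$ is uniformly Cauchy as $R\to\infty$, and its limit $u_1$ satisfies the displayed integral equation, hence~\eqref{ode.f.g}, on all of $(0,\infty)$.

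With the integral equation in hand, the asymptotics are immediate. Writing $w(x)=e^{-ix}u_1(x)$, one has $|w(x)-1|\le\int_x^\infty|g(t)||w(t)|\,dt\le e^{\|g\|_{L^1}}\int_x^\infty|g(t)|\,dt\to 0$, which proves the first part of~\eqref{bhv.eix}; differentiating the integral equation gives $u_1'(x)=ie^{ix}-\int_x^\infty\cos(t-x)g(t)u_1(t)\,dt$, and the same bound yields $e^{-ix}u_1'(x)\to i$. The solution $u_2$ satisfying~\eqref{bhv.e-ix} is produced identically, with $e^{-ix}$ in place of $e^{ix}$. The only genuinely delicate point is the existence construction (the passage $R\to\infty$), which however mirrors exactly its counterpart in the proof of Proposition~\ref{lem2.1}; the bounds $|\sin|,|\cos|\le 1$ make all kernel estimates immediate, so that the whole argument remains within the Riemann-integral-plus-Gronwall framework announced in the introduction.
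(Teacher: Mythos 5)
Your proof is correct, but it takes a genuinely different route from the paper's. The paper builds $\tilde u_1,\tilde u_2$ from the \emph{forward} Volterra equation \eqref{variation} with data at a finite point $a$ (Lemma \ref{bdd} with $\zeta=\pm i$). Since neither $e^{ix}$ nor $e^{-ix}$ dominates the other, these solutions are only asymptotic to mixtures $\xi_1e^{ix}+\xi_2e^{-ix}$ and $\eta_1e^{ix}+\eta_2e^{-ix}$; the paper then invokes the quantitative bound \eqref{hz} with the choice $\|g\|_{L^1(a,\infty)}<\log 2$ to get $|\xi_1|,|\eta_2|>1/2>|\xi_2|,|\eta_1|$, hence $\xi_1\eta_2-\xi_2\eta_1\neq 0$, and recovers the pure asymptotics \eqref{bhv.eix}--\eqref{bhv.e-ix} by solving a $2\times 2$ linear system. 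You instead anchor the integral equation at infinity, $u_1(x)=e^{ix}+\int_x^\infty\sin(t-x)g(t)u_1(t)\,dt$, realized as a uniform limit of Cauchy problems with data at $R\to\infty$. This buys you the asymptotics essentially for free --- $|e^{-ix}u_1(x)-1|\le e^{\|g\|_{L^1}}\int_x^\infty|g|\to 0$ --- with no smallness threshold on $g$ and no linear-algebra step; your limiting argument is sound, since the tail $\int_R^{R'}$ contributes at most $e^{\|g\|_{L^1}}\int_R^\infty|g|$ and Gronwall propagates this to the difference $u_1^{(R')}-u_1^{(R)}$. The price is exactly that $R\to\infty$ passage: it is a limiting construction of the successive-approximation flavour that the paper deliberately avoids in favour of a single Gronwall estimate on an honest Cauchy problem. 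Note also that your remark that this step ``mirrors exactly its counterpart in the proof of Proposition \ref{lem2.1}'' does not match the actual text: there the second exponential decays relative to the first, so the forward construction from $a$ already yields the dominant solution directly and the recessive one via the reduction-of-order formula, with no limit in $R$ anywhere. Both proofs are valid.
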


\noindent By variation of parameters, every solution of (\ref{ode.f.g}) can be written as 
\begin{equation} \label{variation}
u(x)=c_1e^{\zeta x}+c_2 e^{-\zeta x}+\frac{1}{2\zeta}\int_{a}^x(e^{\zeta (x-s)}-e^{-\zeta (x-s)})g(s)u(s)\,ds,
\quad x\in [a,\infty), 
\end{equation}
with $c_1, c_2 \in \C$, $\zeta=1,i, -i$ and $a>0$. In the following Lemma we choose $c_1=1, c_2=0$ 
to construct a solution which behaves like $e^{\zeta x}$ as $x \to \infty$, $\zeta=1,i,-i$.

\begin{lemma}\label{bdd}
Let $\zeta\in \{1,i,-i\}$, $a>0$ and $g\in L^1(a,\infty)$. If $u\in C^2([a,\infty))$ satisfies 
\[
u(x)=e^{\zeta x}+\frac{1}{2\zeta}\int_{a}^x(e^{\zeta (x-s)}-e^{-\zeta (x-s)})g(s)u(s)\,ds,
\qquad x\in [a,\infty), 
\]
then $z(x):=e^{-\zeta x}u(x)$ satisfies 
\begin{align}
\label{z.est}
|z(x)|&\leq e^{\int_{a}^x|g(r)|\,dr}, \qquad x\in [a,\infty)
\\
\label{hz}
\|zg\|_{L^1(a,\infty)}
&\leq e^{\|g\|_{L^1(a,\infty)}}-1.
\end{align}
\end{lemma}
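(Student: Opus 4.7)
The plan is to first rewrite the integral equation in terms of $z$, then obtain the pointwise bound by Gronwall and the $L^1$ bound by a direct integration trick.

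Multiplying the given identity by $e^{-\zeta x}$ and recognising $u(s)=e^{\zeta s}z(s)$, one finds that $z$ satisfies
\[
z(x)=1+\int_{a}^{x}K_\zeta(x-s)\,g(s)\,z(s)\,ds,\qquad K_\zeta(t):=\frac{1-e^{-2\zeta t}}{2\zeta}.
\]
The first step is the kernel bound $|K_\zeta(t)|\le 1$ for $t\ge 0$ and $\zeta\in\{1,i,-i\}$. For $\zeta=1$ this is immediate since $K_1(t)=(1-e^{-2t})/2\in[0,1/2]$; for $\zeta=\pm i$ one writes $1-e^{\mp 2it}=\pm e^{\mp it}\cdot(\mp 2i\sin t)$ and checks $|K_{\pm i}(t)|=|\sin t|\le 1$. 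No $\zeta$-dependent constant appears, which is why a single estimate covers all three cases.

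With this in hand, $|z|$ satisfies the scalar Gronwall-type inequality
\[
|z(x)|\le 1+\int_{a}^{x}|g(s)|\,|z(s)|\,ds,\qquad x\ge a,
\]
and the classical Gronwall lemma gives (\ref{z.est}). For (\ref{hz}) I would avoid invoking Gronwall a second time and instead set $h(x):=\int_{a}^{x}|g(r)|\,|z(r)|\,dr$: the previous inequality reads $|z|\le 1+h$, hence
\[
h'(x)=|g(x)|\,|z(x)|\le |g(x)|\bigl(1+h(x)\bigr),
\]
so $\bigl(\log(1+h)\bigr)'\le |g|$, which integrates to $1+h(x)\le e^{\int_{a}^{x}|g|}$. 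Letting $x\to\infty$ (noting that $h$ is nondecreasing and the right-hand side is bounded by $e^{\|g\|_{L^1(a,\infty)}}$) produces (\ref{hz}).

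The only delicate point is the uniform kernel bound $|K_\zeta(t)|\le 1$, since without it one would obtain different constants in the three cases $\zeta=1,i,-i$ and would have to state Lemma~\ref{bdd} separately for the real and imaginary exponents. Once this observation is made, everything else is a single application of Gronwall followed by an elementary integration.
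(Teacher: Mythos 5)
Your proposal is correct and takes essentially the same route as the paper: the paper likewise rewrites the equation in terms of $z$, bounds the kernel via $|1-e^{-2\zeta(x-s)}|\le 2$ (equivalently $|K_\zeta|\le 1$, without your case analysis), and applies Gronwall to get \eqref{z.est}. For \eqref{hz} the paper just substitutes \eqref{z.est} into $\int_a^\infty|g(s)||z(s)|\,ds$ and integrates the exact derivative $e^{\int_a^s|g(r)|\,dr}$, which is the same computation your log-derivative inequality reproduces.
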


\begin{proof}
Note that  
\[
z(x)=1+\frac{1}{2\zeta}\int_{a}^x(1-e^{-2\zeta (x-s)})g(s)z(s)\,ds, \quad x\in [a,\infty).
\]
Since $|1-e^{-2\zeta (x-s)}|\leq 2$ for $s \leq x$, we see that for $x\geq a$, 
\[
|z(x)|\leq 1+\left|
  \frac{1}{2\zeta}\int_{a}^x(1-e^{-2\zeta (x-s)})g(s)z(s)\,ds
\right|
\leq 
  1+\int_{a}^x|g(s)|\,|z(s)|\,ds.
\]
Thus Gronwall's lemma implies \eqref{z.est}, in particular $z$ is bounded on $[a,\infty)$ and then 
$zg\in L^1(a,\infty)$. 
Moreover we have 
\begin{align*}
\|zg\|_{L^1(a,\infty)}
\leq
\int_{a}^\infty |g(s)|\,e^{\int_{a}^s|g(r)|\,dr}\,ds
=e^{\|g\|_{L^1(a,\infty)}}-1.
\end{align*}
\end{proof}

\begin{proof}[Proof of Proposition \ref{lem2.1}] 
Let $a>0$ such that $\|g\|_{L^1(a,\infty)}<\log 2$ 
and let $u$ be in Lemma \ref{bdd} with $\zeta=1$. 
Then $u$ is one solution of \eqref{ode.f.g} with $f=1$. 
Set $z(x)=e^{-x}u(x)$. Then noting that as $x\to \infty$, 
\begin{align*}
\left|\int_a^x\,e^{-2(x-s)}g(s)z(s)\,ds\right|
&\leq
\int_a^{\frac{a+x}{2}}\,e^{-2(x-s)}|g(s)z(s)|\,ds
+\int_{\frac{a+x}{2}}^x\,|g(s)z(s)|\,ds
\\
&\leq e^{-x+a}\|gz\|_{L^1(a,\infty)}
+\|gz\|_{L^1(\frac{a+x}{2},\infty)}\to 0, 
\end{align*}
we see that $z$ satisfies 
\begin{align*}
z(x)
&\to z_\infty:=1+\int_{a}^\infty g(s)z(s)\,ds\quad \text{as}\ x\to \infty,
\\
z'(x)&
=\int_{a}^xe^{-2(x-s)}g(s)z(s)\,ds
\to 0\quad \text{as}\ x\to \infty.
\end{align*}
By \eqref{hz}, we deduce that $\|zg\|_{L^1(a,\infty)}<1$. 
Therefore $|z_{\infty}-1| \leq \|zg\|_{L^1(a,\infty)}<1$ and hence $z_\infty \neq 0$.  
The function $u_1(x):=z_{\infty}^{-1}e^{x}z(x)$ 
satisfies \eqref{bhv.ex}. 
Moreover, since $u_1^{-2}$ is integrable near $\infty$, 
another solution of \eqref{ode.f.g} 
is given by
\begin{equation} \label{u2}
u_2(x)=2u_1(x)\int_{x}^\infty\frac{1}{u_1(s)^2}\,ds.
\end{equation}
Integrating by parts we deduce that, as $x\to \infty$, 
\begin{align*}
e^{x}u_2(x)
&=2z_{\infty}e^{2x}z(x)\int_{x}^\infty\frac{1}{e^{2s}[z(s)]^2}\,ds
\\
&=z_{\infty}e^{2x}z(x)\left(
-\left[\frac{1}{e^{2s}[z(s)]^2}\right]_{s=x}^{s=\infty}
-2\int_{x}^\infty\frac{z'(s)}{e^{2s}[z(s)]^3}\,ds\right)
\to 1
\end{align*}
and  
\begin{align*}
[e^{x}u_2(x)]'
&=2z_{\infty}e^{2x}z'(x)\int_{x}^\infty\frac{1}{e^{2s}[z(s)]^2}\,ds
+
2e^{x}u_2(x)-\frac{2z_{\infty}}{z(x)}
\to 0.
\end{align*}
\end{proof}

\begin{proof}[Proof of Proposition \ref{lem2.2}] 
Let $a>0$ such that $\|g\|_{L^1(a,\infty)}<\log 2$ and 
let $\tilde{u}_1$ and $\tilde{u}_2$ be as in Lemma \ref{bdd} with 
$\zeta=i$ and with $\zeta=-i$, respectively.
Noting that both $\tilde{u}_1$ and $\tilde{u}_2$ satisfy \eqref{ode.f.g} with $f=-1$, 
and setting $z_1(x)=e^{-ix}\tilde{u}_1(x)$ and $z_2(x)=e^{ix}\tilde{u}_2(x)$, we have as $x \to \infty$
\begin{align*}
e^{2i x}\left(z_1(x)-1-\frac{1}{2i}\int_{a}^\infty g(s)z_1(s)\,ds\right)
&\to 
\frac{1}{2i}\int_{a}^\infty e^{2is}g(s)z_1(s)\,ds, 
\\
e^{-2ix}\left(z_2(x)-1+\frac{1}{2i}\int_{a}^\infty g(s)z_2(s)\,ds\right)
&\to 
-\frac{1}{2i}\int_{a}^\infty e^{-2is}g(s)z_2(s)\,ds
\end{align*}
and
$$
e^{2 ix}z'_1(x) \to \int_a^\infty e^{2is} g(s)z_1(s) \, ds, 
\qquad e^{-2 ix}z'_2(x) \to \int_a^\infty e^{-2is} g(s)z_2(s) \, ds.$$
It follows that 
$\tilde{u}_1 \approx \xi_1e^{ix}+\xi_2e^{-ix}$, $\tilde{u}'_1 \approx i\xi_1e^{ix}-i\xi_2e^{-ix}$ and $\tilde{u}_2\approx \eta_1e^{ix}+\eta_2e^{-ix}$, $\tilde{u}'_2\approx i\eta_1e^{ix}-i\eta_2e^{-ix}$ as $x \to \infty$ where  
$$\xi_1= 1+\frac{1}{2i} \int_a^\infty g(s)z_1(s)\, ds, \qquad \xi_2= -\frac{1}{2i} \int_a^\infty e^{2is} g(s)z_1(s)\, ds,$$
and similarly for $\eta_1, \eta_2$. From  \eqref{hz} we see that
$|\xi_1|>1/2$, $|\xi_2|<1/2$, $|\eta_1|<1/2$ and $|\eta_2|>1/2$ and hence
$|\xi_1\eta_2-\xi_2\eta_1|>0$ and 
$\tilde{u}_1$ and $\tilde{u}_2$ are linearly independent. 
Therefore we can construct solutions $u_1$ and $u_2$  
which satisfy \eqref{bhv.eix} and \eqref{bhv.e-ix}, respectively.
\end{proof}

\noindent We consider now the case $f=0$, assuming extra conditions on $g$.

\begin{proposition}\label{f.zero}
Assume that $xg\in L^1(0,\infty)$. 
Then there exist two solutions $u_1$ and $u_2$ of 
\begin{align}\label{ode.f.zero}
u''(x)=g(x)u(x)
\end{align}
such that
\begin{gather*}
x^{-1}u_1(x)\to 1, 
\qquad
u_1'(x)\to 1, 
\\
u_2(x)\to 1, \qquad 
xu_2'(x)\to 0
\end{gather*}
as $x\to \infty$, respectively.
\end{proposition}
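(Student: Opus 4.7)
The plan is to mimic the proof of Proposition \ref{lem2.1}, replacing the exponentials $e^{\pm x}$ by the fundamental solutions $x$ and $1$ of the unperturbed equation $u''=0$. By variation of parameters, a solution of \eqref{ode.f.zero} on $[a,\infty)$ that should behave like $x$ satisfies the Volterra equation
$$\tilde{u}(x)=x+\int_a^x (x-s)\,g(s)\,\tilde{u}(s)\,ds, \qquad x\ge a.$$
Setting $z(x):=\tilde{u}(x)/x$ transforms this into
$$z(x)=1+\int_a^x \frac{(x-s)s}{x}\,g(s)z(s)\,ds.$$
Since $0\le (x-s)/x\le 1$ and $sg\in L^1(a,\infty)$, a Gronwall argument parallel to Lemma \ref{bdd} yields $|z(x)|\le \exp\bigl(\int_a^x s|g(r)|\,dr\bigr)$ and, as in \eqref{hz}, $\int_a^\infty s|g(s)||z(s)|\,ds \le e^{\|sg\|_{L^1(a,\infty)}}-1$. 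Choosing $a$ so that $\|sg\|_{L^1(a,\infty)}<\log 2$ makes this last quantity strictly less than $1$.

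I would then pass to the limit. Writing $(x-s)/x=1-s/x$, the right-hand side splits as
$$\int_a^x sg(s)z(s)\,ds\;-\;\frac{1}{x}\int_a^x s^2 g(s)z(s)\,ds.$$
The first term tends to $\int_a^\infty sg(s)z(s)\,ds$ because $sgz\in L^1(a,\infty)$. For the second, I would split the integration at a fixed large $A$: the contribution from $[a,A]$ is at most $A\|sg\|_{L^1}\|z\|_\infty/x\to 0$, while on $[A,x]$ the elementary estimate $s^2/x\le s$ reduces the integral to $\|z\|_\infty \int_A^\infty s|g|\,ds$, arbitrarily small for $A$ large. Hence $z(x)\to z_\infty:=1+\int_a^\infty sg(s)z(s)\,ds$; since $|z_\infty-1|<1$ by the bound above, $z_\infty\ne 0$. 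Differentiation of the equation for $\tilde u$ gives $\tilde{u}'(x)=1+\int_a^x sg(s)z(s)\,ds\to z_\infty$, so $u_1:=z_\infty^{-1}\tilde{u}$ satisfies $u_1(x)/x\to 1$ and $u_1'(x)\to 1$.

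As in the proof of Proposition \ref{lem2.1}, I would construct $u_2$ by reduction of order, setting $u_2(x):=u_1(x)\int_x^\infty u_1(s)^{-2}\,ds$ for large $x$ (well-defined since $u_1(s)\sim s$). Using $u_1''=gu_1$ and the identity $u_1(s)^{-2}\,ds = -u_1'(s)^{-1}\,d(1/u_1(s))$, integration by parts gives
$$\int_x^\infty\frac{ds}{u_1(s)^2}=\frac{1}{u_1'(x)\,u_1(x)}-\int_x^\infty\frac{g(s)}{u_1'(s)^2}\,ds,$$
so that $u_2(x)=\frac{1}{u_1'(x)}-u_1(x)\int_x^\infty \frac{g(s)}{u_1'(s)^2}\,ds$ and, by a direct computation, $u_2'(x)=-u_1'(x)\int_x^\infty \frac{g(s)}{u_1'(s)^2}\,ds$. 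The tail bound $\int_x^\infty|g(s)|\,ds\le x^{-1}\int_x^\infty s|g(s)|\,ds$, combined with $u_1(x)\sim x$ and $u_1'(x)\to 1$, shows that both $u_1(x)\int_x^\infty g(s)/u_1'(s)^2\,ds$ and $xu_2'(x)$ tend to $0$, yielding $u_2(x)\to 1$ and $xu_2'(x)\to 0$.

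The main technical subtlety is the elementary justification of the limit for $z$, since $x^{-1}\int_a^x s^2 g(s)z(s)\,ds$ is not controlled directly by the hypothesis $sg\in L^1$ (so dominated convergence cannot be invoked); the split-and-tail argument above plays the role analogous to the midpoint splitting used in Proposition \ref{lem2.1} for the factor $e^{-2(x-s)}$.
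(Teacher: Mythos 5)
Your proof is correct and follows essentially the same route as the paper: your Volterra equation $z(x)=1+\int_a^x \frac{(x-s)s}{x}g(s)z(s)\,ds$ is exactly the identity the paper obtains by integrating $z'(x)=x^{-2}\int_a^x s^2 g(s)z(s)\,ds$ and exchanging the order of integration, and both arguments then run through Gronwall, a splitting argument to kill $x^{-1}\int_a^x s^2 gz\,ds$ (you split at a fixed $A$, the paper at $\sqrt{ax}$), and reduction of order for $u_2$. Your verification of the asymptotics of $u_2$ via the identity $\frac{d}{ds}\bigl(-\tfrac{1}{u_1'u_1}\bigr)=\tfrac{1}{u_1^2}+\tfrac{g}{(u_1')^2}$ and the tail bound $\int_x^\infty|g|\le x^{-1}\int_x^\infty s|g|$ is a welcome explicit rendering of a step the paper only sketches by reference to an earlier proposition.
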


\begin{proof}
Set $u(x):=xz(x)$. Then $z''+(2/x)z'=gz$ and, assuming $z'(a)=0$ we obtain
\begin{equation} \label{z'1}
z'(x)=x^{-2}\int_a^x s^2g(s)z(s)\, ds.
\end{equation}
Then assuming $z(a)=1$
\begin{align} \label{z1}
\nonumber 
|z(x)-1|
&\le\int_b^xt^{-2}\left(\int_a^t s^2|g(s)z(s)|\, ds\right)\, dt
\\&=\int_a^x \left(\int_s^x t^{-2}\, dt\right)s^2|g(s)z(s)|\, ds
\le \int_a^xs|g(s)z(s)|\, ds.
\end{align}
Gronwall's lemma yields 
$$|z(x)| \le e^{\int_a^x s|g(s)|\, ds}$$
hence $z$ is bounded and $z' \in L^1(a, \infty)$ by (\ref{z'1}). 
As in the proof of Proposition \ref{lem2.1}, 
$z(x) \to z_\infty \neq 0$ if $a$ is sufficiently large. 
Moreover, since as $x\to \infty$, 
\begin{align*}
|xz'(x)|\leq 
\sqrt{\frac{a}{x}}\int_a^{\sqrt{ax}} s|g(s)z(s)|\, ds
+\int_{\sqrt{ax}}^x s|g(s)z(s)|\, ds\to 0,
\end{align*}
$u_1(x):=z_\infty^{-1}xz(x)$ satisfies the statement. 
Another solution $u_2$ of \eqref{ode.f.g} is given by
\begin{equation*}
u_2(x):=u_1(x)\int_{x}^\infty\frac{1}{u_1(s)^2}\,ds.
\end{equation*}
As in the proof of Proposition \ref{f.pos} we can verify that 
$u_2$ satisfies $u_2(x)\to 1$ and $xu_2'(x)\to 0$ as $x\to \infty$.
\end{proof}

\noindent Observe the integrability condition for $xg$ near $\infty$ is necessary. In fact, if $g(x)=cx^{-2}$ the above equation has solutions $x^{\alpha}$ if $\alpha^2-\alpha=c$.

\section{Behavior near infinity in the general case}

We recall that the function $\psi_{f,g}$ is defined in (\ref{psi}) 
and set $v_j(x)=|f|^{1/4}u_j(x)$, $j=1,2$ if $u_1, u_2$ are solutions of (\ref{ode.f.g}). 
The hypothesis $|f|^{1/2}$ not summable near $\infty$ guarantees that 
the Liouville transformation $\Phi$  of Lemma \ref{change} maps $(a,\infty)$ onto $(0,\infty)$, 
so that the results of the previous section apply. 
When it is not satisfied $\Phi$ maps $(a,\infty)$ onto a bounded interval $(0,b)$ and the behavior 
of the solutions of (\ref{ode.phi}) near $b$ is more elementary 
(in some cases one can use Proposition \ref{f.zero}).

\begin{proposition}\label{f.pos}
Assume that $f(x)>0$ in $(a,\infty)$, $|f|^{1/2} \not \in L^1(a, \infty)$ 
and $\psi_{f,g}\in L^1(a,\infty)$. 
Then there exist two solutions $u_1$ and $u_2$ of \eqref{ode.f.g} 
such that as $x \to \infty$
\begin{gather}
\label{bhv.f.ex}
e^{-\int_{a}^x|f(r)|^{1/2}dr}v_1(x)\to 1, 
\qquad
|f(x)|^{-1/2}e^{-\int_{a}^x|f(r)|^{1/2}dr} v_1'(x)\to 1, 
\\
\label{bhv.f.e-x}
e^{\int_{a}^x|f(r)|^{1/2}dr}v_2(x)\to 1, \qquad 
|f(x)|^{-1/2}e^{\int_{a}^x|f(r)|^{1/2}dr} v_2'(x)\to -1.
\end{gather}

\end{proposition}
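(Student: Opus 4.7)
The plan is to reduce the statement to Proposition \ref{lem2.1} via the Liouville transformation encapsulated in Lemma \ref{change}.

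\emph{Step 1 (change of variable).} Since $f>0$ on $(a,\infty)$ and $f^{1/2}\notin L^1(a,\infty)$, the map
\[
\Phi(x):=\int_a^x f^{1/2}(r)\,dr
\]
is a $C^2$-diffeomorphism from $(a,\infty)$ onto $(0,\infty)$. Writing $y=\Phi(x)$ and $w(y):=f^{1/4}(x)u(x)=v(x)$, Lemma \ref{change} transforms \eqref{ode.f.g} into
\begin{equation*}
w_{yy}(y)=\bigl(1+\tilde g(y)\bigr)w(y),\qquad y\in(0,\infty),
\end{equation*}
where $\tilde g$ is the function determined by $\tilde g(y)\,dy=\psi_{f,g}(x)\,dx$. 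Because $dx=f^{-1/2}(x)\,dy$, the hypothesis $\psi_{f,g}\in L^1(a,\infty)$ is equivalent to $\tilde g\in L^1(0,\infty)$.

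\emph{Step 2 (asymptotics in $y$).} Proposition \ref{lem2.1}, applied to the transformed equation, yields two solutions $w_1,w_2$ with
\begin{gather*}
e^{-y}w_1(y)\to 1,\qquad e^{-y}w_1'(y)\to 1,\\
e^{y}w_2(y)\to 1,\qquad e^{y}w_2'(y)\to -1
\end{gather*}
as $y\to\infty$.

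\emph{Step 3 (transport back).} Define $u_j(x):=f^{-1/4}(x)\,w_j(\Phi(x))$, so that $v_j(x)=w_j(\Phi(x))$. Since $\Phi(x)=\int_a^x f^{1/2}(r)\,dr$, the first halves of \eqref{bhv.f.ex} and \eqref{bhv.f.e-x} follow immediately from Step 2. For the derivative statements, the chain rule gives $v_j'(x)=f^{1/2}(x)\,w_j'(\Phi(x))$, so that
\[
|f|^{-1/2}(x)\,e^{\mp\int_a^x f^{1/2}(r)\,dr}v_j'(x)=e^{\mp\Phi(x)}\,w_j'(\Phi(x)),
\]
and Step 2 again produces the desired limits $1$ and $-1$ for $j=1,2$ respectively.

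The only real work is Step 1, which is encapsulated in Lemma \ref{change}: one must verify that the Liouville substitution $u=f^{-1/4}w$, $dy=f^{1/2}dx$ produces exactly the bounded perturbation $\tilde g$ of the constant coefficient $1$, and that the change of measure $dx\leftrightarrow f^{-1/2}dy$ identifies the $L^1$-norm of $\tilde g$ with that of $\psi_{f,g}$. Once this is granted, the rest is routine bookkeeping with the chain rule.
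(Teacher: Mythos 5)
Your proof is correct and follows essentially the same route as the paper: apply the Liouville transformation of Lemma \ref{change}, observe that the change of variables identifies the $L^1$-norm of the transformed perturbation with that of $\psi_{f,g}$, invoke Proposition \ref{lem2.1}, and transport the asymptotics back via the chain rule identity $v_j'(x)=f^{1/2}(x)\,w_j'(\Phi(x))$. No substantive differences.
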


\begin{proposition}\label{f.neg}
Assume that $f(x)<0$ in $(a,\infty)$, $|f|^{1/2} \not \in L^1(a, \infty)$ 
and $\psi_{f,g}\in L^1(a,\infty)$. 
Then there exists two solutions $u_1$ and $u_2$ of 
\eqref{ode.f.g} 
such that as$x \to \infty$
\begin{gather}
\label{bhv.f.eix}
e^{-i\int_{a}^x|f(r)|^{1/2}dr}v_1(x)\to 1, 
\qquad
|f(x)|^{-1/2}e^{-i\int_{a}^x|f(r)|^{1/2}dr} v_1'(x)\to i, 
\\
\label{bhv.f.e-ix}
e^{i\int_{a}^x|f(r)|^{1/2}dr}v_2(x)\to 1, \qquad 
|f(x)|^{-1/2}e^{i\int_{a}^x|f(r)|^{1/2}dr} v_2'(x)\to -i.
\end{gather}
\end{proposition}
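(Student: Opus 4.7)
The plan is to imitate the proof of Proposition~\ref{f.pos} verbatim, with Proposition~\ref{lem2.2} playing the role there played by Proposition~\ref{lem2.1}. Apart from a sign change in the reduced equation, the two arguments should be essentially identical.

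Concretely, I would apply the Liouville transformation of Lemma~\ref{change}: introduce the new independent variable $t=\Phi(x):=\int_a^x |f(r)|^{1/2}\,dr$ and the new unknown $w(t)=v(\Phi^{-1}(t))$, where $v(x)=|f(x)|^{1/4}u(x)$. Because $|f|^{1/2}\notin L^1(a,\infty)$, $\Phi$ is a $C^1$ bijection of $(a,\infty)$ onto $(0,\infty)$; because $f<0$, the transformed equation takes the form $w''(t)=(-1+\tilde g(t))w(t)$ on $(0,\infty)$, where the perturbation $\tilde g\in L^1(0,\infty)$ is the push-forward of $\psi_{f,g}$ under $\Phi$. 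Proposition~\ref{lem2.2} then supplies two solutions $w_1,w_2$ of this reduced equation satisfying $e^{\mp it}w_j(t)\to 1$ and $e^{\mp it}w_j'(t)\to \pm i$ as $t\to\infty$.

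Finally I would set $u_j(x):=|f(x)|^{-1/4}w_j(\Phi(x))$, which by Lemma~\ref{change} yields two solutions of (\ref{ode.f.g}). By construction $v_j(x)=w_j(\Phi(x))$ and, by the chain rule, $v_j'(x)=|f(x)|^{1/2}w_j'(\Phi(x))$. Multiplying by $e^{\mp i\int_a^x|f(r)|^{1/2}dr}=e^{\mp i\Phi(x)}$ and using $\Phi(x)\to\infty$ as $x\to\infty$, the asymptotics of $w_j$ and $w_j'$ translate directly into (\ref{bhv.f.eix}) and (\ref{bhv.f.e-ix}). The only non-mechanical point is the assertion that $\psi_{f,g}\in L^1(a,\infty)$ is transported by $\Phi$ into an $L^1$ perturbation of $w''=-w$; but this is precisely the content of Lemma~\ref{change} and does not enter the proof of the proposition itself. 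Once that black box is in hand, the whole argument is a routine chain-rule computation entirely parallel to Proposition~\ref{f.pos}.
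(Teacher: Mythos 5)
Your proposal is correct and follows essentially the same route as the paper: the authors likewise apply the Liouville transformation of Lemma~\ref{change}, note that the pushed-forward perturbation $h(y)=\psi_{f,g}(\Phi^{-1}(y))|f(\Phi^{-1}(y))|^{-1/2}$ lies in $L^1(0,\infty)$ by the change of variables $dy=|f|^{1/2}\,dx$, invoke Proposition~\ref{lem2.2}, and transform back via $u(x)=|f(x)|^{-1/4}w(\Phi(x))$. The chain-rule identity $v_j'(x)=|f(x)|^{1/2}w_j'(\Phi(x))$ that you use to recover the derivative asymptotics is exactly what makes the stated limits come out.
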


\noindent 
The proof is based on the well-known Liouville transformation that we recall below.
\begin{lemma}\label{change}
Let $a>0$ and assume that $f\in C^2([a, \infty))$ satisfies 
$|f(x)|>0$, $|f|^{1/2} \not \in L^1(a, \infty)$. 
Define $\Phi\in C^2([a,\infty))$ by
\[\Phi(x):=\int_{a}^x|f(r)|^{1/2}\,dr, \quad x\in[a,\infty).\]
Then $\Phi^{-1}:[0,\infty)\to [a,\infty)$ and if
$u$ satisfies \eqref{ode.f.g} the function 
\[
w(y):= |f(\Phi^{-1}(y))|^{1/4}u(\Phi^{-1}(y)), \quad y\in [0,\infty)
\]
satisfies 
\begin{equation}\label{ode.phi}
w''(y)=
\left(
  \frac{f(\Phi^{-1}(y))}{|f(\Phi^{-1}(y))|}
+
\frac{\psi_{f,g}(\Phi^{-1}(y))}{|f(\Phi^{-1}(y))|^{1/2}}\right)w(y).
\end{equation}
\end{lemma}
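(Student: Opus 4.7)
The plan is in two stages: first show that $\Phi$ is a $C^2$ bijection of $[a,\infty)$ onto $[0,\infty)$, then carry out the chain-rule computation for the change of dependent variable $u \mapsto w = |f|^{1/4} u$. For the first stage, the assumption $|f|>0$ together with $f \in C^2$ makes $|f|^{1/2}$ continuous and strictly positive, so $\Phi \in C^2([a,\infty))$ with $\Phi(a)=0$ and $\Phi$ strictly increasing; non-integrability of $|f|^{1/2}$ near infinity forces $\Phi(x) \to \infty$, and the inverse function theorem yields $\Phi^{-1} \in C^2([0,\infty))$ with $(\Phi^{-1})'(y) = |f(\Phi^{-1}(y))|^{-1/2}$.

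For the second stage, I set $\phi(x):=|f(x)|^{-1/4}$, so $u(x)=\phi(x)\,w(\Phi(x))$, and compute by the chain rule
\begin{align*}
u'(x) &= \phi'(x)\,w(\Phi(x)) + \phi(x)|f(x)|^{1/2} w'(\Phi(x)), \\
u''(x) &= \phi''(x) w + \bigl(2\phi'(x)|f(x)|^{1/2} + \phi(x)(|f|^{1/2})'(x)\bigr) w' + \phi(x)|f(x)|\,w''.
\end{align*}
The key algebraic fact is the cancellation $2\phi'|f|^{1/2} + \phi\,(|f|^{1/2})' = 0$, which is immediate from $\phi' = -\tfrac{1}{4}|f|^{-5/4}(|f|)'$ and $(|f|^{1/2})' = \tfrac{1}{2}|f|^{-1/2}(|f|)'$. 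This is precisely why the weight $|f|^{1/4}$ appears in the Liouville substitution.

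Substituting into $u''=(f+g)u$, the $w'$ term drops out, and dividing by $\phi(x)|f(x)|$ gives
\[
w''(y) = \left(\frac{f(x)}{|f(x)|} + \frac{g(x)}{|f(x)|} - \frac{\phi''(x)}{\phi(x)|f(x)|}\right) w(y), \qquad x=\Phi^{-1}(y).
\]
To conclude, I match the last bracketed piece with $\psi_{f,g}(x)/|f(x)|^{1/2}$: unwinding \eqref{psi} yields $\psi_{f,g} = -|f|^{-1/4}\phi'' + g|f|^{-1/2}$, and dividing by $|f|^{1/2}$ reproduces $g/|f| - \phi''/(\phi|f|)$, matching \eqref{ode.phi}. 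The only real obstacle is the bookkeeping of the chain rule and checking that the $w'$ coefficient cancels; once that is done, the rest is mechanical.
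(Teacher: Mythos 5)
Your proposal is correct and follows essentially the same route as the paper: a direct chain-rule verification of the Liouville transformation, with the same key cancellation of the first-derivative term caused by the weight $|f|^{1/4}$ (you differentiate $u=\phi\cdot(w\circ\Phi)$ in $x$ and solve for $w''$, while the paper differentiates $w$ in $y$ via $(\Phi^{-1})'=|f(\Phi^{-1}(y))|^{-1/2}$, which is the same computation read in the opposite direction). The identification of the remainder with $\psi_{f,g}/|f|^{1/2}$ is also handled exactly as in the paper.
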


\begin{proof}
Note that 
$\Phi'(x)=|f(x)|^{1/2}$ and 
$\frac{d(\Phi^{-1})}{dy}(y)=|f(\Phi^{-1}(y))|^{-1/2}$.
Setting $w(y)=|f(\Phi^{-1}(y))|^{1/4}u(\Phi^{-1}(y))$ 
(and using $\xi=\Phi^{-1}(y)$ for simplicity), 
we have
\begin{align*}
w'(y)
&=\frac{d}{dx}\left[|f|^{1/4}u\right](\xi)\frac{d(\Phi^{-1})}{dy}(y)
\\
&=
|f(\xi)|^{-1/4}u'(\xi)+\left[|f|^{-1/2}\frac{d}{dx}|f|^{1/4}\right](\xi)u(\xi)
\\
&=
\left[|f|^{-1/4}u'-\frac{d}{dx}(|f|^{-1/4})u\right](\xi),
\\
w''(y)
&=
\frac{d}{dx}
\left[|f|^{-1/4}u'-\frac{d}{dx}(|f|^{-1/4})u\right](\xi)
\frac{d(\Phi^{-1})}{dy}(y)
\\
&=
|f(\xi)|^{-3/4}u''(\xi)-
\left[|f|^{-1/2}\frac{d^2}{dx^2}|f|^{-1/4}\right](\xi)u(\xi)
\\
&=
|f(\xi)|^{-1}
(f(\xi)+g(\xi))w(y)
-
\left[|f|^{-3/4}\frac{d^2}{dx^2}|f|^{-1/4}\right](\xi)w(y).
\end{align*}
Thus we obtain \eqref{ode.phi}.
\end{proof}

\begin{proof}[Proof of Propositions \ref{f.pos} and \ref{f.neg}]
It suffices to apply Propositions \ref{lem2.1} and \ref{lem2.2} 
to the respective cases $f>0$ and $f<0$. 
Set $h(y)=\psi_{f,g}(\Phi^{-1}(y))|f(\Phi^{-1}(y))|^{-1/2}$. 
Then 
\[
\int_{0}^{b}
|h(y)|\,dy
=
\int_{a}^{\infty}
|\psi_{f,g}(x)|\,dx.
\]
Therefore Propositions \ref{lem2.1} and \ref{lem2.2} are applicable 
to $w''=\pm w+h w$, respectively.
Finally, using Lemma \ref{change} and taking $u(x)=|f(x)|^{-1/4}w(\Phi(x))$, 
we obtain the respective assertions in 
Propositions \ref{f.pos} and \ref{f.neg}.
\end{proof}

\section{Behavior near interior singularities} 
If $f$ and $g$ have local singularities at $x_0$,  
then the behavior of solutions  near $x_0$ is also considerable.
For simplicity, we take $x_0=0$. The following propositions 
are meaningful when $|f|^{1/2}$ is not integrable near $0$, in particular when $|f|^{1/2}=cx^{-1}$. We recall that $v_j(x)=|f(x)|^{1/4}u_j(x)$, $j=1, 2$.

\begin{proposition}\label{f.pos.sing}
Assume that $f(x)>0$ in $(0,\infty)$ and $\psi_{f,g}\in L^1(0,\infty)$. 
Then there exist two solutions $u_1$ and $u_2$ of \eqref{ode.f.g} such that as $x\downarrow 0$
\begin{gather*}
e^{-\int_x^1|f(r)|^{1/2}dr}v_1(x)\to 1, 
\qquad
|f(x)|^{-1/2}e^{-\int_x^1|f(r)|^{1/2}dr} v_1'(x)\to -1, 
\\
e^{\int_x^1|f(r)|^{1/2}dr}v_2(x)\to 1, 
\qquad
|f(x)|^{-1/2}e^{\int_x^1|f(r)|^{1/2}dr} v_2'(x)\to 1.
\end{gather*}

\end{proposition}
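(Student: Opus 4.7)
The plan is to reduce to Proposition \ref{lem2.1} via a Liouville-type transformation, exactly as in the proof of Proposition \ref{f.pos}, but now arranged so that $x \downarrow 0$ (rather than $x \to \infty$) corresponds to $y \to \infty$ in the transformed equation. Throughout I assume the ``meaningful'' case $|f|^{1/2}\notin L^1(0,1)$; otherwise $\Phi$ below sends $(0,1]$ onto a bounded interval and a separate, more elementary argument is needed.

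Define $\Phi:(0,1]\to [0,\infty)$ by
\[
\Phi(x):=\int_x^1 |f(r)|^{1/2}\,dr,
\]
so that $\Phi'(x)=-|f(x)|^{1/2}<0$ and $\Phi$ is a decreasing bijection with $\Phi(x)\to\infty$ as $x\downarrow 0$ and $\frac{d\Phi^{-1}}{dy}(y)=-|f(\Phi^{-1}(y))|^{-1/2}$. Setting $w(y):=|f(\Phi^{-1}(y))|^{1/4}u(\Phi^{-1}(y))$ and repeating the computation in Lemma \ref{change}, the two minus signs entering the chain rule when computing $w''$ multiply to $+1$, so $w$ satisfies exactly
\[
w''(y)=w(y)+h(y)w(y), \qquad h(y):=\frac{\psi_{f,g}(\Phi^{-1}(y))}{|f(\Phi^{-1}(y))|^{1/2}},
\]
and a change of variables gives $\int_0^\infty |h(y)|\,dy=\int_0^1|\psi_{f,g}(x)|\,dx<\infty$.

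Proposition \ref{lem2.1}, applied to $w''=w+hw$ on $[0,\infty)$, produces two solutions $w_1,w_2$ with
\[
e^{-y}w_1(y)\to 1,\quad e^{-y}w_1'(y)\to 1,\quad e^{y}w_2(y)\to 1,\quad e^{y}w_2'(y)\to -1
\]
as $y\to\infty$. Define $u_j(x):=|f(x)|^{-1/4}w_j(\Phi(x))$, which solves \eqref{ode.f.g} by Lemma \ref{change}, and note that $v_j(x)=w_j(\Phi(x))$. Since $\Phi(x)\to\infty$ as $x\downarrow 0$, the two claimed asymptotics for $v_1$ and $v_2$ are immediate. For the derivatives, $v_j'(x)=\Phi'(x)w_j'(\Phi(x))=-|f(x)|^{1/2}w_j'(\Phi(x))$, so
\[
|f(x)|^{-1/2}e^{\mp \Phi(x)}v_j'(x)=-e^{\mp\Phi(x)}w_j'(\Phi(x)),
\]
and the minus sign reverses the limits $\pm 1$ obtained for $w_j'$, giving precisely the $-1$ for $v_1'$ and $+1$ for $v_2'$ stated in the proposition.

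The only real point of care is verifying the reversed Liouville transformation. In Lemma \ref{change}, $\Phi'=+|f|^{1/2}$; here $\Phi'=-|f|^{1/2}$, and one must check that the sign cancels when computing $w''$ so that the transformed equation has exactly the same form. Once this is done, the remainder is bookkeeping.
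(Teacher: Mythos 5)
Your proof is correct, but it is organized differently from the paper's. The paper first performs the inversion $s=1/x$, setting $w(s)=s\,u(s^{-1})$, which turns the singularity at $0$ into behavior at infinity for a new equation with $\tilde f(s)=s^{-4}f(s^{-1})$, $\tilde g(s)=s^{-4}g(s^{-1})$; the key computation there is the invariance $\psi_{\tilde f,\tilde g}(s)=s^{-2}\psi_{f,g}(s^{-1})$, after which Proposition \ref{f.pos} (i.e.\ the increasing Liouville transformation plus Proposition \ref{lem2.1}) is applied wholesale, together with the identity $\int_1^s|\tilde f|^{1/2}=\int_{1/s}^1|f|^{1/2}$. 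You instead apply a single decreasing Liouville transformation $\Phi(x)=\int_x^1|f|^{1/2}$ and invoke Proposition \ref{lem2.1} directly; your key computation is that the two factors of $dx/dy=-|f|^{-1/2}$ cancel in $w''$, so the transformed equation is unchanged, while the single factor in $v_j'=\Phi'\,w_j'(\Phi)$ produces exactly the sign reversal in the derivative limits. The net change of variables is in fact the same in both arguments (composing the paper's inversion with its Liouville map for $\tilde f$ gives your $\Phi$), so the approaches are equivalent in substance; yours is shorter and avoids computing $\psi_{\tilde f,\tilde g}$, while the paper's reuses Lemma \ref{change} verbatim and isolates the pleasant fact that the hypothesis \eqref{psi} is invariant under Kelvin inversion. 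Your explicit caveat that $|f|^{1/2}\notin L^1(0,1)$ is needed is fair: the paper's proof requires it too (Proposition \ref{f.pos} demands $|\tilde f|^{1/2}\notin L^1$ near infinity, which is the same condition), and the paper flags this only in the remark preceding the statement.
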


\begin{proposition}\label{f.neg.sing}
Assume that $f(x)<0$ in $(0,\infty)$ and $\psi_{f,g}\in L^1(0,\infty)$. 
Then there exist two solutions $u_1$ and $u_2$ of \eqref{ode.f.g} 
such that as $x\downarrow 0$
\begin{gather*}
e^{-\int_x^1|f(r)|^{1/2}dr}v_1(x)\to 1, 
\qquad
|f(x)|^{-1/2}e^{-\int_x^1|f(r)|^{1/2}dr} v_1'(x)\to -i, 
\\
e^{\int_x^1|f(r)|^{1/2}dr}v_2(x)\to 1, 
\qquad
|f(x)|^{-1/2}e^{\int_x^1|f(r)|^{1/2}dr} v_2'(x)\to i.
\end{gather*}
\end{proposition}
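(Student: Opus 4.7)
The plan is to reduce Proposition \ref{f.neg.sing} to Proposition \ref{lem2.2} by the same Liouville transformation used in Lemma \ref{change}, but arranged so that the singularity at $x=0$ is pushed out to $+\infty$ in the new variable. Concretely, I would introduce
\[
\tilde\Phi(x) := \int_x^1 |f(r)|^{1/2}\,dr, \qquad x \in (0,1].
\]
When $|f|^{1/2}$ is not summable near $0$ (the nontrivial regime pointed out just before the statement), $\tilde\Phi$ is a decreasing $C^2$ bijection of $(0,1]$ onto $[0,\infty)$ with $\tilde\Phi'(x) = -|f(x)|^{1/2}$. The marginal case $|f|^{1/2} \in L^1(0,1)$ is less interesting and can be handled directly, since $\tilde\Phi$ then maps onto a bounded interval and the claim reduces to standard existence up to $x=0$.

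Setting $w(y) := |f(\xi)|^{1/4} u(\xi)$ with $\xi = \tilde\Phi^{-1}(y)$ and repeating the chain-rule computation of Lemma \ref{change}, the two sign changes coming from $\tfrac{d\tilde\Phi^{-1}}{dy} = -|f(\xi)|^{-1/2}$ cancel when passing from $w'$ to $w''$, so $w$ satisfies exactly the same transformed equation
\[
w''(y) = \left(\frac{f(\xi)}{|f(\xi)|} + \frac{\psi_{f,g}(\xi)}{|f(\xi)|^{1/2}}\right) w(y), \qquad y \in [0,\infty).
\]
Since $f<0$, this is $w'' = -w + h(y) w$ with $h(y) := \psi_{f,g}(\xi)/|f(\xi)|^{1/2}$. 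The change of variables $y = \tilde\Phi(\xi)$ yields
\[
\int_0^\infty |h(y)|\,dy = \int_0^1 |\psi_{f,g}(\xi)|\,d\xi \le \|\psi_{f,g}\|_{L^1(0,\infty)} < \infty,
\]
so Proposition \ref{lem2.2} applies and produces two solutions $w_1, w_2$ on $[0,\infty)$ with the desired oscillatory behavior as $y\to\infty$.

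Finally, I would pull back by setting $u_j(x) := |f(x)|^{-1/4} w_j(\tilde\Phi(x))$, so that $v_j(x) = w_j(\tilde\Phi(x))$ and $v_j'(x) = -|f(x)|^{1/2}\, w_j'(\tilde\Phi(x))$. Reading off the asymptotics of $w_j$ and $w_j'$ at $y=\tilde\Phi(x)\to\infty$ translates directly into the claimed limits for $v_j$ and $|f|^{-1/2}v_j'$ as $x\downarrow 0$; the extra minus sign from $\tilde\Phi'$ is what flips the signs of the derivative limits relative to Proposition \ref{f.neg}. The only real obstacle is careful sign bookkeeping, first in verifying that the transformed equation for $w$ is unchanged despite $\tilde\Phi$ being decreasing, and second in tracking the minus sign through to the derivative asymptotics. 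Once these are handled, the statement is a mechanical specialization of Lemma \ref{change} and Proposition \ref{lem2.2}.
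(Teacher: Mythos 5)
Your proposal is correct, and the sign bookkeeping works out: with $\tilde\Phi(x)=\int_x^1|f(r)|^{1/2}dr$ decreasing, the two factors of $-|f|^{-1/2}$ indeed cancel in the passage to $w''$, so Lemma \ref{change} survives verbatim, and $v_j'(x)=-|f(x)|^{1/2}w_j'(\tilde\Phi(x))$ flips the derivative limits of Proposition \ref{lem2.2} from $i,-i$ to $-i,i$ as claimed. (Note the exponents in the statement should carry a factor $i$, as in Proposition \ref{f.neg}; your argument proves that corrected version.) The paper takes a different, two-step route: it first applies the Kelvin-type inversion $w(s)=s\,u(s^{-1})$, which converts \eqref{ode.f.g} into the same equation with $\tilde f(s)=s^{-4}f(s^{-1})$, $\tilde g(s)=s^{-4}g(s^{-1})$, verifies the invariance $\psi_{\tilde f,\tilde g}(s)=s^{-2}\psi_{f,g}(s^{-1})\in L^1$, and then invokes the already-proved Proposition \ref{f.neg} at infinity, translating back via $\int_1^s|\tilde f(r)|^{1/2}dr=\int_{1/s}^1|f(t)|^{1/2}dt$. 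Your one-step decreasing Liouville map reaches Proposition \ref{lem2.2} directly and spares you the $\psi_{\tilde f,\tilde g}$ computation, at the cost of having to re-derive the transformed equation for a decreasing phase function and to track the extra minus sign by hand; the paper's inversion reuses Propositions \ref{f.pos}--\ref{f.neg} as black boxes and concentrates all the work in one identity for $\psi$. Both arguments implicitly require $|f|^{1/2}\notin L^1$ near $0$ (the paper needs it so that $|\tilde f|^{1/2}\notin L^1$ near $\infty$, you need it so that $\tilde\Phi$ maps onto $[0,\infty)$); you flag this explicitly, which is a small improvement in precision over the paper's remark that the propositions are ``meaningful'' only in that regime.
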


\begin{proof}[Proof of Propositions \ref{f.pos.sing} and \ref{f.neg.sing}]
Setting $w(s):=s u(s^{-1})$ we see that 
\begin{align*}
w''(s)&=s^{-3}u''(s^{-1})
\\
&=s^{-3}(f(s^{-1})+g(s^{-1}))u(s^{-1})
=s^{-4}(f(s^{-1})+g(s^{-1}))w(s).
\end{align*}
Let $\tilde{f}(s):=s^{-4}f(s^{-1})$ 
and $\tilde{g}(s):=s^{-4}g(s^{-1})$.
Noting that 
\begin{align*}
\psi_{\tilde{f},\tilde{g}}(s)
&=s|f(s^{-1})|^{-1/4}
\left(-\frac{d^2}{ds^2}+s^{-4}g(s^{-1})\right)\left(s|f(s^{-1})|^{-1/4}\right)
\\
&=s^{-2}|f(s^{-1})|^{-1/4}
\left(-\frac{d^2}{dx^2}|f|^{-1/4}+g|f|^{-1/4}\right)(s^{-1})
\\
&=s^{-2}\psi_{f,g}(s^{-1}),
\end{align*}
we have $\psi_{\tilde{f},\tilde{g}}\in L^1((0,\infty))$, and hence 
Propositions \ref{f.pos} and \ref{f.neg} can be applied. 
Since 
\[
\int_{1}^{s}|\tilde{f}(r)|^{1/2}dr=\int_{1/s}^1|f(t)|^{1/2}dt,
\]
we obtain the respective assertions in Propositions \ref{f.pos.sing} and \ref{f.neg.sing}.
\end{proof}

\section{Examples from special functions}
Some examples
illustrate the application of the results of the previous sections.

\begin{example}[Modified Bessel functions]\label{ex.mbessel}
We consider the modified Bessel equation of order $\nu$
\begin{align}\label{mbessel.eq}
u''+\frac{u'}{r}-\left(1+\frac{\nu^2}{r^2}\right)u=0,
\end{align}
All solutions of \eqref{mbessel.eq} can be 
written through the modified Bessel functions $I_{\nu}$ and $K_{\nu}$. 
Both $I_{\nu}$ and $K_{\nu}$ are positive, 
$I_{\nu}$ is monotone increasing and $K_\nu$ is monotone decreasing 
(see e.g., \cite[Theorem 7.8.1]{Olver}).
Proposition \ref{lem2.1} and Proposition \ref{f.pos.sing} 
give the precise behavior of $I_\nu$ and $K_\nu$ 
near $\infty$ and near $0$, respectively. 
In fact, \eqref{mbessel.eq} can be written as
\begin{align}\label{mbessel.eq2}
(\sqrt{r}u)''=\left(1+\frac{4\nu^2-1}{4r^2}\right)(\sqrt{r}u).
\end{align}
Since $1/r^2$ is integrable near $\infty$, 
choosing $f=1$ and $g=\frac{4\nu^2-1}{4r^2}$, we see from Proposition \ref{lem2.1} that
\begin{gather*}
\sqrt{r}e^{-r}I_\nu(r)\to c_1 \neq 0 \quad\text{and}\quad \sqrt{r}e^{r}K_\nu(r)\to c_2 \neq 0 
\qquad\text{as $r\to \infty$}.
\end{gather*}
Moreover, if $\nu\neq 0$, then 
choosing $f(r)=\frac{\nu^2}{r^2}$ and $g(r)=1-\frac{1}{4r^2}$, that is, 
$\psi_{f,g}(r)=r/\nu$, 
from Proposition \ref{f.pos.sing} we have 
\begin{gather*}
r^{-\nu}I_\nu(r)\to c_3 \neq 0 \quad {\rm and }\quad r^{\nu}K_\nu(r)\to c_4 \neq 0
\qquad\text{as $r\downarrow 0$}.
\end{gather*}
If $\nu=0$, then putting 
$w(s)=u(e^{-s})$ we obtain
\[
w''(s)=e^{-2s}w(s), \qquad s\in \R.
\]
Therefore using Proposition \ref{f.zero} with $\tilde{g}(s)=e^{-2s}$ 
and taking $u(x)=w(-\log x)$, we have 
\begin{gather*}
I_0(r)\to c_5 \neq 0\quad\text{and}\quad 
|\log r|^{-1} K_0(r)\to c_6 \neq 0
\qquad\text{as $r \downarrow 0$}.
\end{gather*}
\end{example}

\begin{example}[Fundamental solution of $\lambda-\Delta$] 
For $n \geq 3$, $\lambda \geq 0$ the fundamental solution $v_\lambda$ of $\lambda-\Delta$ 
can be computed by integrating the heat kernel:
\begin{equation*}
v_\lambda (r)=\int_0^\infty \frac{1}{(4\pi t)^{n/2}}e^{-\lambda t-\frac{r^2}{4t}}\, dt,
\end{equation*}
where $r=|x|$. 
Clearly $v_\lambda(r) \le v_0(r)=cr^{2-n}$, $v_\lambda (r) \to 0$ as $r \to \infty$. 
The function $v=v_\lambda$ satisfies
\begin{equation*}
v''+\frac{n-1}{r}v'=\lambda v
\end{equation*} 
or, setting $v=r^{(1-n)/2}w$, 
\begin{equation*}
w''=\left(\lambda+\frac{n^2-1}{4r^2}\right)w.
\end{equation*} 
Proceeding as in the example above we see that $r^{2-n}v(r) \to c_1 \neq 0$ as $r \to 0$ 
and $r^{(n-1)/2}e^{\sqrt{\lambda}r}v(r)\to c_2 \neq 0$ as $r \to \infty$.
\end{example}

\begin{example}[Bessel functions]\label{ex.bessel}
Next we consider the Bessel equation of order $\nu$
\begin{align}\label{bessel.eq}
u''+\frac{u'}{r}+\left(1-\frac{\nu^2}{r^2}\right)u=0, 
\end{align}
or equivalently, 
\begin{align*}
(\sqrt{r}u)''=\left(-1+\frac{4\nu^2-1}{4r^2}\right)(\sqrt{r}u).
\end{align*}
All solutions of \eqref{bessel.eq} can be 
written through the Bessel functions $J_{\nu}$ and $Y_{\nu}$. 
As in Example \ref{ex.mbessel}, 
from Propositions \ref{f.pos.sing} (for $\nu>0$) and \ref{f.zero} (for $\nu=0$)
we obtain the behavior of $J_{\nu}$ and $Y_\nu$ near $0$ 
\begin{equation*} 
  r^{-\nu} J_\nu(r)\to c_1\neq 0, \quad{\rm and}\quad
  r^{\nu}  Y_\nu(r)\to c_2\neq 0 \qquad \text{as $r\downarrow 0$}
\end{equation*} 
and if $\nu=0$, 
\begin{equation*} 
  |\log r| J_0(r)\to c_3\neq 0, \quad{\rm and}\quad
  Y_0(r)\to c_4\neq 0 \qquad \text{as $r\downarrow 0$}.
\end{equation*} 
In view of Proposition \ref{lem2.2} 
the behavior of $J_\nu$ and $Y_\mu$ near $\infty$ is given by 
\begin{equation*} 
  |\sqrt{r} J_\nu(r)-c_5\cos(r+\theta_1)| \to 0, \quad{\rm and}\quad
  |\sqrt{r} Y_\nu(r)-c_6\cos(r+\theta_2)| \to 0,
\end{equation*} 
as $r\to \infty$, where $c_5\neq0$, $c_6\neq 0$ and $\theta_1,\theta_2\in [0,\pi)$ satisfy $\theta_1\neq\theta_2$.
\end{example}

\end{document}